\newcounter{myitem}[section]
\newtheorem{thm}{Theorem}
\newtheorem{cor}[thm]{Corollary}
\theoremstyle{definition}
\newtheorem{rmk}[thm]{Remark}
\numberwithin{equation}{thm}
\newcommand{\RomI}{\uppercase\expandafter{\romannumeral 1}}
\newcommand{\RomII}{\uppercase\expandafter{\romannumeral 2}}
\newcommand{\RomIII}{\uppercase\expandafter{\romannumeral 3}}
\newcommand{\cA}{\mathcal A}
\newcommand{\cH}{\mathcal H}
\newcommand{\cO}{\mathcal O}
\newcommand{\ca}{complex analytic }
\newcommand{\coh}{{\rm H}}
\DeclareMathOperator{\dbar}{\overline{\partial}}
\DeclareMathOperator{\kernel}{Ker}
\begin{document}

\title{A remark on the decomposition theorem
for direct images of canonical sheaves
tensorized with semipositive vector bundles}
\author{Taro Fujisawa \\
 \\
Tokyo Denki University \\
e-mail: fujisawa@mail.dendai.ac.jp}
\footnotetext[0]
{\hspace{-18pt}2010 {\itshape Mathematics Subject Classification}.
Primary 14F05; Secondary 32J25. \\
{\itshape Key words and phrases}.
decomposition theorem, semipositive vector bundles.}

\maketitle

\begin{abstract}
The purpose of this short note is to give a remark
on the decomposition theorem
for direct images of canonical sheaves
tensorized with Nakano semipositive vector bundles.
Although our result is a direct consequence of Takegoshi's work
in \cite{Takegoshi},
it was not stated explicitly in \cite{Takegoshi}.
Here we give the precise statement and the proof.
\end{abstract}

%\section{Main Result}
The decomposition theorem for direct images of canonical sheaves
was proved by J.~Koll\'ar \cite[Theorem 3.1]{KollarII}.
Inspired by the work of S. Matsumura \cite{MatsumuraVT},
here we note that
the decomposition theorem also holds
for direct images of canonical sheaves
tensorized with Nakano semipositive vector bundles.
Although Theorem \ref{main theorem} below
is a direct consequence of Takegoshi's results in \cite{Takegoshi},
it was not stated explicitly there.
Therefore we give the precise statement
of the decomposition theorem and prove it explicitly here.
We remark that Theorem \ref{main theorem} below
immediately implies 
the weaker form of the decomposition theorem
\cite[\RomI \ Decomposition Theorem]{Takegoshi}
(cf. Corollary \ref{corollary}).

The author would like to thank Osamu Fujino and Shin-ichi Matsumura
for their helpful discussion.

\begin{thm}
\label{main theorem}
Let $X$ be a K\"ahler manifold of pure dimension,
$Y$ a \ca space
and $f: X \longrightarrow Y$ a proper surjective morphism
such that all the connected components of $X$ are mapped surjectively to $Y$.
For a Nakano semipositive vector bundle $(E,h)$ on $X$,
we have an isomorphism
\begin{equation}
\bigoplus_{q}R^qf_{\ast}(\omega_X \otimes E)[-q]
\simeq
Rf_{\ast}(\omega_X \otimes E)
\end{equation}
in the derived category of $\cO_Y$-modules.
\end{thm}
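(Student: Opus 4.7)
The plan is to extract the decomposition from Takegoshi's harmonic theory in \cite{Takegoshi} by realizing $Rf_{\ast}(\omega_X \otimes E)$ via an explicit acyclic resolution and identifying inside it a harmonic subcomplex that encodes the splitting.

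First, I would replace $\omega_X \otimes E$ by its Dolbeault resolution $\cA_X^{n, \bullet}(E)$, the complex of smooth $(n, q)$-forms on $X$ with values in $E$, where $n = \dim X$. Since the sheaves $\cA_X^{n, q}(E)$ are fine on $X$, hence $f_{\ast}$-acyclic, the object $Rf_{\ast}(\omega_X \otimes E)$ is represented in $D(\cO_Y)$ by the complex $f_{\ast}\cA_X^{n, \bullet}(E)$.

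The central step is to exhibit, inside $f_{\ast}\cA_X^{n, \bullet}(E)$, a subcomplex $\cH^{\bullet}$ of relatively harmonic forms such that (a) the differentials of $\cH^{\bullet}$ vanish identically; (b) the induced map $\cH^q \to R^q f_{\ast}(\omega_X \otimes E)$ is an isomorphism of $\cO_Y$-modules; and (c) the inclusion $\cH^{\bullet} \hookrightarrow f_{\ast}\cA_X^{n, \bullet}(E)$ is a quasi-isomorphism. Given (a)--(c), one deduces at once
\[
\bigoplus_{q} R^q f_{\ast}(\omega_X \otimes E)[-q] \simeq \cH^{\bullet} \simeq Rf_{\ast}(\omega_X \otimes E)
\]
in $D(\cO_Y)$. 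Takegoshi's harmonic integral theory for proper K\"ahler morphisms with Nakano semipositive coefficients is designed to supply exactly these ingredients: harmonic representatives for cohomology classes, the vanishing of $\dbar$ on harmonic forms (yielding (a)), a canonical identification of harmonic forms with direct-image sections (yielding (b)), and the existence of a unique harmonic representative in each cohomology class (yielding (c)).

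The main technical obstacle will be to ensure that the harmonic subcomplex $\cH^{\bullet}$ is well defined globally on $Y$, not merely on each relatively compact Stein open subset where the $L^2$ harmonic theory is most naturally formulated. Concretely, I would fix once and for all a Hermitian metric on $X$ together with the given metric $h$ on $E$, and argue that the induced harmonic projection is canonical, so that Takegoshi's local subcomplexes patch to a global one. Verifying this patching -- which is precisely the ingredient missing from Takegoshi's \RomI\ Decomposition Theorem -- is what upgrades his local splitting to the derived-category statement claimed above.
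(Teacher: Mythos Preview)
Your proposal is correct and follows essentially the same route as the paper: represent $Rf_{\ast}(\omega_X\otimes E)$ by the Dolbeault complex $f_{\ast}\cA_X^{n,\bullet}(E)$, then use Takegoshi's harmonic subsheaves to split it. The one point worth correcting is your assessment of the ``main technical obstacle''. No new patching argument is needed: in the proof of his Theorem~6.4, Takegoshi already constructs $R^0f_{\ast}\cH^{n,q}(E)$ as a global $\cO_Y$-subsheaf of $\kernel(\dbar)\subset f_{\ast}\cA_X^{n,q}(E)$ and proves that the inclusion induces an isomorphism onto $R^qf_{\ast}(\omega_X\otimes E)$. Once you quote this, your conditions (a) and (b) are immediate, (c) follows formally from them, and the derived-category decomposition is a one-line consequence. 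In other words, the upgrade from Takegoshi's \RomI\ Decomposition Theorem to the derived-category statement is purely an observation about what his harmonic theory already provides, not an additional analytic verification.
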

\begin{proof}
The sheaf of $E$-valued $C^{\infty}$ $(p,q)$-forms on $X$
is denoted by $\cA_X^{p,q}(E)$.
Then we have the Dolbeault quasi-isomorphism
\begin{equation}
\omega_X \otimes E
\longrightarrow
(\cA_X^{n,\bullet}(E), \dbar) ,
\end{equation}
which is an $f_{\ast}$-acyclic resolution of $\omega_X \otimes E$.
Therefore we have an isomorphism
\begin{equation}
Rf_{\ast}(\omega_X \otimes E)
\simeq
(f_{\ast}\cA_X^{n,\bullet}(E), \dbar)
\end{equation}
in the derived category of $\cO_Y$-modules.

In the proof of Theorem 6.4 in \cite{Takegoshi},
Takegoshi defines an $\cO_Y$-subsheaf
$R^0f_{\ast}\cH^{n,q}(E)$ of
$\kernel(\dbar:
f_{\ast}\cA_X^{n,q}(E)
\longrightarrow
f_{\ast}\cA_X^{n,q+1}(E))$
such that the canonical inclusion
\begin{equation}
R^0f_{\ast}\cH^{n,q}(E)
\longrightarrow
\kernel(\dbar:
f_{\ast}\cA_X^{n,q}(E)
\longrightarrow
f_{\ast}\cA_X^{n,q+1}(E))
\end{equation}
induces an isomorphism of $\cO_Y$-modules
\begin{equation}
\label{isomorphism for q}
R^0f_{\ast}\cH^{n,q}(E)
\overset{\simeq}{\longrightarrow}
R^qf_{\ast}(\omega_X \otimes E)
\end{equation}
for every $q$.
The composite of the inclusions
\begin{equation}
R^0f_{\ast}\cH^{n,q}(E)
\longrightarrow
\kernel(\dbar:
f_{\ast}\cA_X^{n,q}(E)
\longrightarrow
f_{\ast}\cA_X^{n,q+1}(E))
\longrightarrow
f_{\ast}\cA_X^{n,q}(E)
\end{equation}
is denoted by $\varphi^q$.
Then $\varphi^q$ defines a morphisms of complexes
\begin{equation}
R^0f_{\ast}\cH^{n,q}(E)[-q]
\longrightarrow
f_{\ast}\cA_X^{n,\bullet}(E)
\end{equation}
for every $q$.
Because we have the isomorphism \eqref{isomorphism for q} for every $q$,
we obtain a quasi-isomorphism
\begin{equation}
\bigoplus_q R^0f_{\ast}\cH^{n,q}(E)[-q]
\longrightarrow
f_{\ast}\cA_X^{n,\bullet}(E)
\end{equation}
by taking direct sum for all $q$.
Combining with the isomorphism
\begin{equation}
\bigoplus_q R^qf_{\ast}(\omega_X \otimes E)[-q]
\longleftarrow
\bigoplus_q R^0f_{\ast}\cH^{n,q}(E)[-q] ,
\end{equation}
we obtain an isomorphism
\begin{equation}
\bigoplus_q R^qf_{\ast}(\omega_X \otimes E)[-q]
\longleftarrow
\bigoplus_q R^0f_{\ast}\cH^{n,q}(E)[-q] ,
\longrightarrow
f_{\ast}\cA_X^{n,\bullet}(E)
\simeq
Rf_{\ast}(\omega_X \otimes E)
\end{equation}
in the derived category as desired.
\end{proof}

As a corollary of the theorem above,
we have the following:

\begin{cor}
\label{corollary}
In addition to the situation in Theorem \ref{main theorem},
let $g: Y \longrightarrow Z$ be any morphism of \ca spaces.
Then we have
\begin{equation}
\bigoplus_{p+q=n}
R^pg_{\ast}R^qf_{\ast}(\omega_X \otimes E)
\simeq
R^n(g \cdot f)_{\ast}(\omega_X \otimes E)
\end{equation}
for every $n$.
In particular, we have
\begin{equation}
\label{decomposition of cohomology groups}
\bigoplus_{p+q=n}
\coh^p(Y, R^qf_{\ast}(\omega_X \otimes E))
\simeq
\coh^n(X, \omega_X \otimes E)
\end{equation}
for every $n$.
\end{cor}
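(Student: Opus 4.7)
The plan is to apply the derived pushforward $Rg_{\ast}$ to the decomposition supplied by Theorem \ref{main theorem} and then extract the $n$-th cohomology sheaf. By Theorem \ref{main theorem} we have
\begin{equation}
Rf_{\ast}(\omega_X \otimes E) \simeq \bigoplus_q R^qf_{\ast}(\omega_X \otimes E)[-q]
\end{equation}
in the derived category of $\cO_Y$-modules. Applying $Rg_{\ast}$ to both sides and invoking the composition rule $R(g \cdot f)_{\ast} \simeq Rg_{\ast} \circ Rf_{\ast}$, together with the fact that $Rg_{\ast}$ commutes with finite direct sums and with shifts, will convert this into the identification
\begin{equation}
R(g \cdot f)_{\ast}(\omega_X \otimes E) \simeq \bigoplus_q Rg_{\ast}(R^qf_{\ast}(\omega_X \otimes E))[-q]
\end{equation}
in the derived category of $\cO_Z$-modules. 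The direct sum is a priori finite because $X$ has pure dimension, so $R^qf_{\ast}(\omega_X \otimes E)$ vanishes outside a bounded range of $q$, and no convergence issues arise.

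The second step is to pass to the $n$-th cohomology sheaf of both sides. The left-hand side yields $R^n(g \cdot f)_{\ast}(\omega_X \otimes E)$, while each summand on the right contributes $R^{n-q}g_{\ast}(R^qf_{\ast}(\omega_X \otimes E))$, since the $n$-th cohomology sheaf of $Rg_{\ast}(\cF)[-q]$ is $R^{n-q}g_{\ast}\cF$. Re-indexing with $p = n-q$ then produces the first claimed isomorphism.

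For the final assertion \eqref{decomposition of cohomology groups}, I would specialize to the case where $Z$ is a single point, taking $g$ to be the structure morphism of $Y$. In that situation $R^pg_{\ast}\cF$ becomes $\coh^p(Y, \cF)$ for any sheaf $\cF$ on $Y$, and $R^n(g \cdot f)_{\ast}(\omega_X \otimes E)$ becomes $\coh^n(X, \omega_X \otimes E)$, so the statement follows immediately from the first one. The argument is essentially formal; no serious obstacle is expected, since the only nontrivial input beyond Theorem \ref{main theorem} is the composition law for derived pushforwards, which is standard in the complex analytic category.
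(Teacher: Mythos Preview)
Your argument is correct and is exactly the standard derivation the paper has in mind: the paper states the result as an immediate corollary of Theorem~\ref{main theorem} without giving any proof, and your application of $Rg_{\ast}$ together with $R(g\cdot f)_{\ast}\simeq Rg_{\ast}\circ Rf_{\ast}$, followed by taking cohomology sheaves, is precisely how one unpacks that implication.
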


\begin{rmk}
For the case of $X$ being compact,
the decomposition \eqref{decomposition of cohomology groups}
of the cohomology groups
is proved by S. Matsumura \cite[Corollary 1.2]{MatsumuraVT}.
\end{rmk}

\providecommand{\bysame}{\leavevmode\hbox to3em{\hrulefill}\thinspace}
\providecommand{\MR}{\relax\ifhmode\unskip\space\fi MR }
% \MRhref is called by the amsart/book/proc definition of \MR.
\providecommand{\MRhref}[2]{%
  \href{http://www.ams.org/mathscinet-getitem?mr=#1}{#2}
}
\providecommand{\href}[2]{#2}

\end{document}